\newcommand\reallywidehat[1]{%
	\savestack{\tmpbox}{\stretchto{%
			\scaleto{%
				\scalerel*[\widthof{\ensuremath{#1}}]{\kern-.6pt\bigwedge\kern-.6pt}%
				{\rule[-\textheight/2]{1ex}{\textheight}}
			}{\textheight}%
		}{0.5ex}}%
	\stackon[1pt]{#1}{\tmpbox}%
}
\renewcommand*{\backref}[1]{}
\renewcommand*{\backrefalt}[4]{%
	\ifcase #1 (Not cited.)%
	\or        (Cited on page~#2.)%
	\else      (Cited on pages~#2.)%
	\fi}
\newcommand{\K}{K\"ahler}
\DeclareMathOperator{\reg}{reg}
\DeclareMathOperator{\sing}{sing}
\DeclareMathOperator{\supp}{supp}
\DeclareMathOperator{\Ext}{Ext}
\numberwithin{equation}{section}
\def\eqref#1{(\ref{#1})}
\newcommand{\R}{{\mathbb R}}
\newcommand{\ei}{\textup{i}}
\newcommand{\del}{\partial}
\newcommand{\delb}{\overline{\partial}}
\def\1{\sqrt{-1}\:}
\newcommand{\cntrct}                
{\hspace{2pt}\raisebox{1pt}{\text{$\lrcorner$}}\hspace{2pt}}
\newcommand{\Vol}{\operatorname{Vol}}
\newcommand{\Hom}{\operatorname{Hom}}
\newcommand{\codim}{\operatorname{codim}}
\newcommand{\Deck}{\operatorname{Deck}}
\newcommand{\ie}{{\em i.e. }}
\renewcommand{\to}{\longrightarrow}
\newcounter{Mycounter}[section]
\newcounter{lemma}[section]
\newcounter{claim}[section]
\newcounter{sublemma}[section]
\newcounter{corollary}[section]
\newcounter{theorem}[section]
\newcounter{conjecture}[section]
\newcounter{proposition}[section]
\newcounter{definition}[section]
\newcounter{example}[section]
\newcounter{remark}[section]
\newcounter{problem}[section]
\newcounter{question}[section]
\tikzset{join/.code=\tikzset{after node path={%
			\ifx\tikzchainprevious\pgfutil@empty\else(\tikzchainprevious)%
			edge[every join]#1(\tikzchaincurrent)\fi}}}
\tikzset{>=stealth',every on chain/.append style={join},
	every join/.style={->}}
\newtheorem*{rep@theorem}{\rep@title}
\newcommand{\newreptheorem}[2]{%
	\newenvironment{rep#1}[1]{%
		\def\rep@title{\ref{##1}}%
		\begin{rep@theorem}}%
		{\end{rep@theorem}}}
\begin{document}
	
	\newpage
	
	\title[Blow-ups and modifications of lcK spaces]{Blow-ups and modifications of lcK spaces}
	
	\author{Ovidiu Preda}
	\address{Ovidiu Preda \newline
		\textsc{\indent University of Bucharest, Faculty of Mathematics and Computer Science\newline 
			\indent 14 Academiei Str., Bucharest, Romania\newline
			\indent and\newline
			\indent Institute of Mathematics ``Simion Stoilow'' of the Romanian Academy\newline 
			\indent 21 Calea Grivitei Street, 010702, Bucharest, Romania}}
	\email{ovidiu.preda@fmi.unibuc.ro; ovidiu.preda18@icloud.com}
	
	\author{Miron Stanciu}
	\address{Miron Stanciu \newline
		\textsc{\indent University of Bucharest, Faculty of Mathematics and Computer Science\newline 
			\indent 14 Academiei Str., Bucharest, Romania\newline
			\indent and\newline
			\indent Institute of Mathematics ``Simion Stoilow'' of the Romanian Academy\newline 
			\indent 21 Calea Grivitei Street, 010702, Bucharest, Romania}}
	\email{miron.stanciu@fmi.unibuc.ro; miron.stanciu@imar.ro}
	
	\thanks{Both authors were partially supported by a grant of Ministry of Research and Innovation, CNCS - UEFISCDI, project no.
		PN-III-P1-1.1-TE-2021-0228, within PNCDI III. \\\\[.2cm]
		{\bf Keywords:} Locally conformally \K, blow-up, proper modification.\\
		{\bf 2020 Mathematics Subject Classification:} 32S45; 53C55.
	}
	
	\date{\today}

	\begin{abstract}
		In this article, we prove that the blow-up of a locally irreducible lcK space $X$ along a subspace $Z$ which verifies certain conditions is lcK if and only if $X$ is induced gcK, generalizing a theorem of Ornea-Verbitsky-Vuletescu to singular locally irreducible spaces.  We also show that even if modifications of lcK spaces are not always of lcK type, they always admit quasi-lcK metrics.
	\end{abstract}
	
	\maketitle
	
	\hypersetup{linkcolor=blue}
	\tableofcontents

	\section{Introduction}
	
	In many problems arising in differential geometry, an essential step is choosing a good metric, in some sense determined by the particularities of the problem. Perhaps the best case scenario is that of K\" ahler metrics, which have been widely studied and have many useful properties. However, they are in a sense quite rare, as the existence of K\" ahler metrics on a given compact complex space has a number of topological obstructions. Therefore, in non-K\" ahler geometry, one seeks to replace them with a suitable class of Hermitian metrics. 
	
	In \cite{vaisman1976}, Vaisman introduced the notion of locally conformally K\" ahler (lcK) manifolds. They are complex manifolds endowed with a hermitian metric whose associated 2-form $\omega$ satisfies $d \omega=\theta\wedge \omega$ for a closed 1-form $\theta$, called the Lee form of $\omega$. This is equivalent to saying that locally, there exists a smooth function $f$ such that $e^{-f}\omega$ is K\" ahler, hence the name. The 1-form $\theta$ is exact if and only if the function $f$ can be defined globally, and in this case $\omega$ is called globally conformally K\" ahler (gcK). As an alternative definition, lcK manifolds can be described as quotients of K\" ahler manifolds by a discrete group of automorphisms which act as homotheties on the K\" ahler form. Some years later, Vaisman \cite{vaisman1980} proved that on a compact complex manifold, pure lcK (\ie non-gcK) and K\" ahler metrics with respect to the same complex structure cannot coexist. For a modern and comprehensive study of lcK geometry, see \cite{OV}.
	
	There is no straightforward definition of $(p, q)$-forms on complex spaces with singularities which preserves all the nice properties from the smooth case. Instead, Grauert \cite{grauert} generalized \K\ forms to complex spaces with possible singularities using families of locally defined strictly plurisubharmonic functions with some compatibility conditions. The same idea can be used to define lcK forms on complex spaces.
	
	\medskip
	
	In our first two papers \cite{PS1} and \cite{PS2}, we showed that both the characterization theorem of lcK manifolds \textit{via} the universal cover, and Vaisman's theorem on the \textit{pure lcK -- K\" ahler} dichotomy for compact complex manifolds remain true for complex spaces, the latter only for locally irreducible complex spaces (for the locally reducible case, we presented a counterexample). These two results are essential for the further study of lcK spaces. 
	
	In this paper, we study blow-ups, and, more generally, modifications of lcK spaces. Our first goal was to look at the main result of \cite{OVV}, which says that the blow-up of an lcK manifold $(M,\omega,\theta)$ along a compact complex submanifold $Z$ admits lcK metrics if and only if  $\omega_{\restriction Z}$ is gcK. We show that under reasonable conditions on $Z$, this result is still true for complex spaces. More exactly, we prove:
	
	\begin{reptheorem}{main_theorem_blowup}
		Let $(X,\omega,\theta)$ be an lcK space, and $Z\subset X$ a compact complex subspace, which is normal and is a locally complete intersection. 
		
		Then, the blow-up of $X$ along $Z$, denoted $\widehat{X}$, admits an lcK metric if and only if $\omega_{\restriction Z}$ is gcK.
	\end{reptheorem}
	
	In our proof for the direct implication, we make use of Varouchas' results \cite{Varouchas1989} which give sufficient conditions under which the image of a \K\ space under a holomorphic map is also of \K\ type. For this, we need $Z$ to be normal and the fibers of the canonical projection of the blow-up to be compact complex manifolds of equal dimension. The latter condition is satisfied if $Z$ is a locally complete intersection. We also use Vaisman's theorem for lcK spaces \cite{PS2}, for which we need local irreducibility of $Z$, guaranteed by the normality assumption. The reverse implication is true for any compact subspace $Z$, and this is done by refining \cite[Theorem 3.1]{PS2} to obtain the slightly more general \ref{blow-up-is-kahler-plus}.
	
	\medskip
	
	Since the blow-up of an lcK space is not necessarily of lcK type, we were then interested to find the closest class of metrics which are stable under blow-up and, more generally, under modifications. It turns out that this is achieved simply by working with a more general definition than that of lcK metrics and allowing the strictly plurisubharmonic functions which locally define the metric to take the value $-\infty$ on a controlled set of points. We call this type of metric a \textit{quasi-lcK metric}, inspired by the notion of \textit{quasi-K\" ahler metric} introduced by Col\c toiu \cite{Coltoiu1986}, and later used by Popa-Fischer \cite{PF} under the name of \textit{generalized K\" ahler metric}. The result we prove here is the following:
	
	\begin{reptheorem}{apf_with_lck}
		Let $p:X\to Y$ be a modification of the compact complex space $Y$. Suppose that $Y$ is quasi-lcK. Then, $X$ also admits a quasi-lcK metric. 
	\end{reptheorem}
	
	In particular, any modification of an lcK space is quasi-lcK. 
	
	The strategy for the proof is the following: we use \cite[Theorem 2.5]{PF} to construct a quasi-K\" ahler metric $\widetilde{\omega}$ on the universal cover of $\widetilde{X}$ of $X$. We show that if all the choices we make in that construction are well related, then $\Deck(\widetilde{X}/X)$ acts by homotheties on $\widetilde{\omega}$. Finally, \cite[Theorem 3.10]{PS1} can be easily adapted to quasi-lcK spaces, so it can be applied to conclude that $X$ admits a quasi-\K\ metric.
	
	\vspace{10pt}
	
	The paper is organized as follows: in Section \ref{sec:prelim} we give the definitions and the results we need about the blow-ups of complex spaces, modifications, K\" ahler, lcK, quasi-\K\ and quasi-lcK metrics, TC 1-forms, and also three important theorems for our goals: the first is a result about blow-ups of \K\ spaces, the second is Varouchas' theorem on the \K ianity of holomorphic images of \K \ spaces, and the third is Vaisman's theorem for lcK spaces; Section \ref{sec:blow-ups-of-lck-spaces} contains the proof of \ref{main_theorem_blowup}, with the main steps contained in two lemmas, for a better presentation; finally, Section \ref{sec:apf-cu-lck-sectiunea} contains the technical proof of \ref{apf_with_lck}.

	\section{Preliminaries}\label{sec:prelim}
	
	We begin this section by defining the blow-up of a complex space along a closed subspace.
	
	\begin{definition}\label{blow-up with graph}
		Let $X$ be a complex space and $Z=V(I)$ a closed subspace of $X$ defined by an ideal $I$ of $\mathcal{O}(X)$, generated by elements $g_0,g_1\ldots,g_k$. The morphism 
		\[
		\gamma:X\setminus Z \to \mathbb{P}^{k}, a\mapsto [g_0(a):g_1(a):\ldots:g_k(a)]
		\]
		is then well defined. The closure $\widehat{X}$ of the graph of $\gamma$ inside $X\times \mathbb{P}^{k}$ together with the restriction $\pi:\widehat{X}\to X$ of the projection $X\times\mathbb{P}^{k}\to X$ is the blow-up of $X$ along $Z$. 
		It does not depend, up to isomorphism over $X$, on the choice of the generators $g_i$ of $I$. Therefore, the blow-up along arbitrary closed subspaces can be constructed by gluing together local blow-ups. 
		$\pi^{-1}(Z)$ is a Cartier divisor, in particular a hypersurface, and is called the \textit{exceptional divisor} of the blow-up. $Z$ is called the \textit{center of the blow-up}.
	\end{definition}
	
	\begin{definition}\label{complete intersection}
		Let $X$ be a complex space and $Z\subset X$ a closed subspace. $Z$ is called \textit{complete intersection} in $X$ if there exists $g_1,\ldots,g_s\in\mathcal{O}(X)$, where $s=\codim_X Z$, such that $Z=\{x\in X \mid g_1(x)=\cdots=g_s(x)=0\}$. The closed subspace $Z$ is called \textit{locally complete intersection} if for any $x\in Z$, there exists an open subset $x\in U\subset X$, such that $Z\cap U$ is a complete intersection in $U$.
	\end{definition}
	
	\begin{remark}
		Let $X$ be a complex space and $Z\subset X$ a locally complete intersection, such that for an open $U\subset X$, we have $$Z\cap U=\{x\in U \mid g_0(x)=g_1(x)=\cdots=g_k(x)=0\},$$ with $g_0,g_1,\ldots g_k\in\mathcal{O}(U)$. Then, the blow-up $\widehat{X}$ of $X$ along $Z$ is defined above $U$ by
		\[
		\widehat{X}\cap \pi^{-1}(U)=\{(x,[z])\in U\times \mathbb{P}^k \mid g_i(x)z_j=z_i g_j(x), \text{ for any } 0\leq i,j\leq k\},
		\]
		where $\pi:\widehat{X}\to X$  is the canonical projection. Moreover, for any $x\in Z$, the fiber $\pi^{-1}(x)$ is isomorphic to $\mathbb{P}^k$. Also, restricting the projection $\pi$ to $$\pi_{\restriction \pi^{-1}(Z)}:\pi^{-1}(Z)\to Z,$$ we obtain a locally trivial holomorphic fibration, with $\mathbb{P}^k$ as fiber.
	\end{remark}
	
	\begin{definition}\label{proper_modification}
		A holomorphic map $p:X\to Y$ is called \textit{modification} if it is proper and there exists a rare analytic set $A\subset Y$ such that $p^{-1}(A)$ is rare in $X$ and such that $p_{\restriction {X\setminus p^{-1}(A)}}:X\setminus p^{-1}(A)\to Y\setminus A$ is a biholomorphism.
	\end{definition}
	
	\begin{remark}
		Blow-ups are particular cases of modifications.
	\end{remark}
	
	\bigskip
	
	The following are the main definitions we work with. It is customary in \K \ and non-\K \ smooth geometry to use the term ``metric" to refer also to the associated $2$-form, and we adopt the same convention:
	
	\begin{definition}\label{K+lcK}
		Let $X$ be a complex space. 
		\begin{enumerate}
			\item[\textbf{(K)}] A \textit{K\" ahler metric} on $X$ is the equivalence class $\reallywidehat{(U_i,\varphi_i)_{i\in I}}$ of a family such that $(U_i)_{i\in I}$ is an open cover of $X$, $\varphi_i:U_i\to \mathbb{R}$ is $\mathcal{C}^\infty$ and strictly psh, and $\ei\partial\overline{\partial}\varphi_i=\ei\partial\overline{\partial}\varphi_j$ on $U_i\cap U_j\cap X_{\reg}$, for every $i,j\in I$. 
			Two such families are equivalent if their union verifies the compatibility condition on the intersections, described above.
			
			\item[\textbf{(lcK)}] An \textit{lcK metric} on $X$ is the equivalence class $\reallywidehat{(U_i,\varphi_i,f_i)_{i\in I}}$ of a family such that $(U_i)_{i\in I}$ is an open cover of $X$, $\varphi_i:U_i\to \mathbb{R}$ is $\mathcal{C}^\infty$ and strictly psh, $f_i:U_i\to\mathbb{R}$ is smooth, and $\ei e^{f_i}\partial\overline{\partial}\varphi_i=\ei e^{f_j}\partial\overline{\partial}\varphi_j$ on $U_i\cap U_j\cap X_{\reg}$, for every $i,j\in I$. 
			Again, two such families are equivalent if their union verifies the compatibility condition mentioned above.
		\end{enumerate}
		
	\end{definition}
	
	\vspace{5pt}
	
	Next, we define quasi-K\" ahler and quasi-lcK metrics by allowing $-\infty$ as a value for the system of strictly plurisubharmonic functions. The definition for quasi-K\" ahler metrics was introduced in \cite{Coltoiu1986}, but we also require $\mathcal{C}^\infty$-regularity, as in \cite{PF}. 
	
	\begin{definition}\label{K_general+lcK_general}
		Let $X$ be a complex space. For $U\subset X$ and a psh function $\varphi:U\to\mathbb{R}$, we denote $\{\varphi=-\infty\}=A_{\varphi}$.
		\begin{enumerate}
			\item[\textbf{(q-K)}] A \textit{quasi-K\" ahler metric} on $X$ is the equivalence class $\reallywidehat{(U_i,\varphi_i)_{i\in I}}$ of a family such that 
			\begin{enumerate}
				\item[(a)] $(U_i)_{i\in I}$ is an open cover of $X$
				
				\item[(b)] $\varphi_i:U_i\to [-\infty,\infty)$ is strictly psh, $\varphi_i\not \equiv -\infty$ on any irreducible component of $U_i$, and $\varphi_i$ is of class $\mathcal{C}^\infty$ on $U_i\setminus A_{\varphi_i}$
				
				\item[(c)] $\ei\partial\overline{\partial}\varphi_i=\ei\partial\overline{\partial}\varphi_j$ on $(U_i\cap U_j)\setminus (X_{\sing}\cup A_{\varphi_i}\cup A_{\varphi_j})$, for any $i,j\in I$. 
				
				\item[(d)] $\varphi_i- \varphi_j$ restricted to $U_i\cap U_j \setminus (A_{\varphi_i}\cup A_{\varphi_j})$ is locally bounded around points of $A_{\varphi_i}\cup A_{\varphi_j}$,  for any $i,j\in I$. 
				
			\end{enumerate}
			
			\noindent Two such families are equivalent if their union still verifies the compatibility conditions (c) and (d).
			
			\vspace{5pt}
			
			\item[\textbf{(q-lcK)}] A \textit{quasi-lcK metric} on $X$ is the equivalence class $\reallywidehat{(U_i,\varphi_i,f_i)_{i\in I}}$ of a family $(U_i,\varphi_i,f_i)_{i\in I}$ such that $(U_i,\varphi_i)_{i\in I}$ verifies conditions (a) and (b) in \ref{K_general+lcK_general} -- (q-K), and moreover:
			\begin{enumerate}
				\item[(e)] $f_i:U_i\to \mathbb{R}$ is of class $\mathcal{C}^\infty$ for any $i\in I$
				
				\item[(f)] $\ei e^{f_i}\partial\overline{\partial}\varphi_i=\ei e^{f_j}\partial\overline{\partial}\varphi_j$ on $(U_i\cap U_j)\setminus (X_{\sing}\cup A_{\varphi_i}\cup A_{\varphi_j})$, for any $i,j\in I$. 
				
				\item[(g)] $(f_i-f_j)\varphi_i-\varphi_j$ restricted to $U_i\cap U_j \setminus (A_{\varphi_i}\cup A_{\varphi_j})$ is locally bounded around points of $A_{\varphi_i}\cup A_{\varphi_j}$,  for any $i,j\in I$. 
			\end{enumerate} 
			
			\noindent Two such families are equivalent if their union still verifies conditions (f) and (g).
		\end{enumerate}
		
	\end{definition}
	
	\begin{remark}
		The definition given in \cite{PF} for quasi-\K\ metrics is stronger than the conditions required by our \ref{K_general+lcK_general} -- (q-K), and they coincide for normal spaces. 
	\end{remark}
	
	\vspace{10pt}
	
	The following definition allows us to work with a substitute of the Lee form of a quasi-lcK metric, even if we are not in the smooth context: 
	
	\begin{definition}\label{TC-1-form-definition}
		\begin{itemize}
			\item Let $X$ be a topological space and consider $(U_i,f_i)_{i\in I}$, consisting of an open cover $(U_i)_{i\in I}$ of $X$ and a family of continuous functions $f_i:U_i\to\mathbb{R}$ such that $f_i-f_j$ is locally constant on $U_i\cap U_j$, for all $i,j\in I$. The class  
			\[
			\theta =\reallywidehat{(U_i,f_i)_{i\in I}}\in  \check{\mathrm{H}}^0\left(X,\faktor{\mathscr{C}}{\underline{\R}}\right)
			\]
			is called a \textit{topologically closed $1$-form} (\textit{TC $1$-form}). 
			\item We say that a TC 1-form $\theta$ is \textit{exact} if $\theta = \widehat{(X, f)}$ for a continuous function $f:X \to \mathbb{R}$. In this case, we make the notation $\theta = df$.
			\item 	Let $\omega=\reallywidehat{(U_i,\varphi_i,f_i)_{i\in I}}$ be an lcK metric on a complex space $X$. Then, the TC 1-form $\theta=\reallywidehat{(U_i,f_i)_{i\in I}}$ is called the \textit{Lee form} of $\omega$. If $\theta$ is exact, then $\omega$ is called \textit{globally conformally K\" ahler (gcK)}. We have the obvious analogous definition for \textit{q-gcK}.
		\end{itemize}
	\end{definition}
	
	\vspace{10pt}
	
	An important result in the geometry of \K \ spaces by Fujiki (see \cite[3.1]{Varouchas1986}, \cite[Lemma 2]{fuji75}) states that the blow-up of a compact \K \ space along a complex subspace is also \K. A rewritten proof can be found in our previous paper \cite[Theorem 3.1]{PS2}. Moreover, a careful examination of that proof shows that we can obtain a little more by observing that the line bundle $\mathcal{O}(1)$ is trivial outside any neighborhood of the exceptional divisor and choosing conveniently the sections involved in the construction of the new metric, such that this new metric coincides with the pull-back of the old one away from the exceptional divisor. This also allows us to drop the compactness requirement on $X$. Hence, we obtain the following slightly improved version:
	
	\begin{theorem}\label{blow-up-is-kahler-plus}
		Let $(X,\omega)$ be a K\"aher space and $Z\subset X$ a compact complex subspace of positive codimension. Let $V\supset Z$ be an open neighborhood. Then, the blow-up $\pi:\widehat{X}\to X$ of $X$ along $Z$ admits a K\"ahler metric $\widehat{\omega}$ such that $\widehat{\omega}=\pi^*\omega$ on $\widehat{X}\setminus \widehat{V}$, where $V=\pi^{-1}(V)$.
	\end{theorem}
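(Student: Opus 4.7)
The argument is a refinement of the proof of \cite[Theorem 3.1]{PS2}, itself a rewriting of Fujiki's classical construction. Two observations give the stronger conclusion. First, the construction there is purely local around the center $Z$: all that is needed is a finite cover of $Z$ by coordinate charts trivialising the defining ideal of $Z$, which exists because $Z$ is compact, so the compactness hypothesis on $X$ plays no role. Second, the correction to $\pi^*\omega$ is produced from a Hermitian metric on the tautological line bundle $L=\calo_{\widehat X}(1)$, and since $L$ is trivial on $\widehat X\setminus E$ (with $E=\pi^{-1}(Z)$ the exceptional divisor), one may choose this Hermitian metric to be flat relative to the canonical trivialisation outside $\widehat V$. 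The resulting curvature correction then vanishes off $\widehat V$, which forces the equality $\widehat\omega=\pi^*\omega$ there.

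\textbf{Setup and construction.} By compactness of $Z$, cover it by finitely many open sets $U_\alpha$ with $\overline{U_\alpha}\subset V$ on which the ideal of $Z$ is generated by $g^{(\alpha)}_0,\dots,g^{(\alpha)}_{k_\alpha}\in\calo(U_\alpha)$; over $\pi^{-1}(U_\alpha)$ the blow-up embeds in $U_\alpha\times\mathbb{P}^{k_\alpha}$, and pulling back $\calo_{\mathbb{P}^{k_\alpha}}(1)$ yields a line bundle $L$ on $\pi^{-1}(V)$ that extends trivially to $\widehat X\setminus E$. Choose two open sets $W_1,W_2$ with $Z\subset W_1\subset\overline{W_1}\subset W_2\subset\overline{W_2}\subset V$ and $\overline{W_2}$ compact (possible since $Z$ is compact), and a Hermitian metric $h$ on $L$ such that: (a) on $\widehat{W_1}=\pi^{-1}(W_1)$, the curvature $\alpha_h=c_1(L,h)$ restricts to a positive multiple of the Fubini--Study form along each exceptional fibre; (b) on $\widehat X\setminus\widehat{W_2}$, $h$ coincides with the flat metric coming from the canonical trivialisation of $L|_{\widehat X\setminus E}$. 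Such $h$ is obtained by smooth interpolation on the relatively compact annulus $\widehat{W_2}\setminus\widehat{W_1}$. Consequently $\alpha_h\equiv 0$ off $\widehat{W_2}$, and in particular off $\widehat V$. Set $\widehat\omega := \pi^*\omega + \epsilon\alpha_h$ for $\epsilon>0$ to be chosen small.

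\textbf{Verification and main obstacle.} By construction, $\widehat\omega=\pi^*\omega$ on $\widehat X\setminus\widehat V$, as desired. To verify that $\widehat\omega$ is a Kähler metric on $\widehat X$ in the sense of \ref{K+lcK}(K), combine local potentials $\varphi_i$ of $\omega$ with local non-vanishing sections $s_\alpha$ of $L$ to form the candidate potentials
\[
\widehat\varphi := \varphi_i\circ\pi \,-\, \epsilon\log\|s_\alpha\|_h^2,
\]
which satisfy $\ei\partial\overline\partial\widehat\varphi = \pi^*\omega+\epsilon\alpha_h$ on the regular locus independently of the section chosen (transitions contribute only pluriharmonic terms $\log|f|^2$). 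Strict plurisubharmonicity of $\widehat\varphi$ near $E$ is proved exactly as in \cite[Theorem 3.1]{PS2}: the transverse positivity of $\varphi_i\circ\pi$ combines with the Fubini--Study fibrewise positivity of $-\log\|s_\alpha\|_h^2$. The main obstacle is the transition annulus $\widehat{W_2}\setminus\widehat{W_1}$, where $\alpha_h$ may be indefinite because of the interpolation of $h$; but $\pi$ is a biholomorphism on this set (it is disjoint from $E$), so $\pi^*\omega$ is already strictly positive, and a compactness argument on the relatively compact set $\overline{\widehat{W_2}}\setminus\widehat{W_1}$ provides a uniform lower bound on $\pi^*\omega$ that a sufficiently small $\epsilon$ cannot destroy. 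Outside $\widehat V$ the cut-off term drops out and the potentials reduce to $\varphi_i\circ\pi$, recovering $\pi^*\omega$ as required.
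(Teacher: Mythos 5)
Your proposal takes essentially the same route as the paper, which presents this theorem precisely as a refinement of \cite[Theorem 3.1]{PS2} based on the two observations you make: compactness of $Z$ (rather than of $X$) suffices to run the construction, and the canonical triviality of $\mathcal{O}(1)$ off the exceptional divisor allows one to choose the Hermitian metric/sections to be flat outside $\widehat{V}$, so that the curvature correction vanishes there and the potentials reduce to $\varphi_i\circ\pi$. Your interpolation over the relatively compact annulus $\widehat{W_2}\setminus\widehat{W_1}$ with a small-$\epsilon$ compactness argument, and the deferral of the positivity analysis near $E$ to the argument of \cite[Theorem 3.1]{PS2}, match the paper's intended proof.
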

	
	\vspace{10pt}
	
	By combining \cite[Proposition 3.3.1]{Varouchas1989} and \cite[Theorem 3$^\prime$]{Varouchas1989}, we get the following theorem, which gives sufficient conditions under which the image of a \K\ space under a holomorphic map is also of \K\ type.
	
	\begin{theorem}\label{th_varouchas}
		Let $(X,\omega)$ be a \K\ space, $X^\prime$ a normal space, and $\pi:(X,\omega)\to X^\prime$ a holomorphic function with the following properties:
		\begin{enumerate}
			\item[(i)] $\pi$ is proper, open, and surjective.
			\item[(ii)] all fibers of $\pi$ are of pure dimension $m$.
		\end{enumerate}
		Then, $X^\prime$ also admits a \K\ metric.
	\end{theorem}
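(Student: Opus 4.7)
The plan is to construct a \K\ metric on $X'$ by \emph{fiber-integrating} local \K\ potentials of $(X,\omega)$, in the spirit of Varouchas' push-forward technique. First, cover $X$ by opens $U_\alpha$ on which $\omega$ is represented by strictly psh potentials $\varphi_\alpha\in\mathcal{C}^\infty(U_\alpha)$ with $\varphi_\alpha-\varphi_\beta$ pluriharmonic on overlaps. For each $y_0\in X'$, properness of $\pi$ ensures that $\pi^{-1}(y_0)$ is compact, so one can find a relatively compact Stein open neighborhood $V\subset X'$ of $y_0$ whose preimage $\pi^{-1}(V)$ is covered by finitely many $U_{\alpha_1},\dots,U_{\alpha_N}$.

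Next, pick a smooth partition of unity $(\chi_k)$ on $\pi^{-1}(V)$ subordinate to this subcover and form $\Phi_V:=\sum_k\chi_k\varphi_{\alpha_k}$. Although $\Phi_V$ need not itself be psh, on the regular locus $\ei\partial\overline{\partial}\Phi_V$ differs from $\omega$ only by an explicit error built from the $d\chi_k$'s paired with the pluriharmonic differences $\varphi_{\alpha_k}-\varphi_{\alpha_\ell}$. I would then define
\[
\psi_V(y) := \int_{\pi^{-1}(y)} \Phi_V \cdot \omega^m.
\]
By openness and properness of $\pi$ combined with equidimensionality of the fibers, $\psi_V$ is continuous on $V$, and smooth on $V\cap X'_{\reg}$, where $\pi$ restricts to a proper holomorphic submersion between manifolds. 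On that locus, commuting $\ei\partial\overline{\partial}$ with fiber integration should show that the error terms integrate to a pluriharmonic function on $V\cap X'_{\reg}$, while the main contribution is strictly positive in the transverse directions thanks to the strict positivity of $\omega$.

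To patch the local potentials into a global \K\ structure, I would check that on overlaps $V\cap V'$ the difference $\psi_V-\psi_{V'}$ is pluriharmonic on $(V\cap V')\cap X'_{\reg}$; normality of $X'$ then forces it to extend pluriharmonically across $X'_{\sing}$. Hence the family $\{(V,\psi_V)\}$ defines a \K\ metric on $X'$ in the Grauert sense.

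The main obstacle is making precise the strict plurisubharmonicity of $\psi_V$: one must control the partition-of-unity error terms and the ``non-horizontal'' contributions to $\ei\partial\overline{\partial}\psi_V$ so that they do not overwhelm the positive contribution of $\omega$ after integration along the fibers. I would attack this through a local analysis on Stein charts of $V$, using openness of $\pi$ to shrink $V$ enough that the horizontal part of $\omega$ dominates, and, if needed, replacing $\Phi_V$ with $\Phi_V + C\,h\circ\pi$ for a suitable strictly psh function $h$ on $V$ and a sufficiently large constant $C$. This is exactly the content packaged into the combination of \cite[Proposition~3.3.1]{Varouchas1989} and \cite[Theorem~3$'$]{Varouchas1989} cited in the statement.
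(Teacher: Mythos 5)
The paper offers no proof of this statement at all: it is imported directly as the combination of \cite[Proposition 3.3.1]{Varouchas1989} and \cite[Theorem 3$'$]{Varouchas1989}. Since you too ultimately defer the ``main obstacle'' to those same two citations, your proposal agrees with the paper in the only sense available; the question is whether the expository bridge you build to them is sound, and it is not.

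The patching step is genuinely broken. On an overlap $V\cap V'$ the two partition-of-unity sums differ by $\Phi_V-\Phi_{V'}=\sum_{k,l}\chi_k\chi'_l(\varphi_{\alpha_k}-\varphi_{\beta_l})$, which is an arbitrary smooth function, not a pluriharmonic one; hence $\ei\del\delb(\psi_V-\psi_{V'})=\pi_*\bigl((\ei\del\delb\Phi_V-\ei\del\delb\Phi_{V'})\wedge\omega^m\bigr)$ is the fiber integral of the difference of the two partition-of-unity error terms and has no reason to vanish, so the local forms $\ei\del\delb\psi_V$ do not even agree on overlaps. The same objection defeats your claim that the error terms of a single $\Phi_V$ ``integrate to a pluriharmonic function'': $\ei\del\chi_k\wedge\delb(\varphi_{\alpha_k}-\varphi_{\alpha_\ell})$ and $\ei(\del\delb\chi_k)(\varphi_{\alpha_k}-\varphi_{\alpha_\ell})$ integrate along the fibers to smooth functions with no harmonicity or sign control, and adding $C\,h\circ\pi$ cannot restore a cocycle condition that fails. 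Varouchas' actual argument avoids this by never integrating potentials of the global form: he pushes forward a \v{C}ech cochain of strictly psh functions fiberwise and then invokes a separate absorption result (a strictly psh function plus a controlled continuous correction remains strictly psh after shrinking) to repair exactly the defect you elide. A second genuine gap is the regularity and plurisubharmonicity of $y\mapsto\int_{\pi^{-1}(y)}\varphi\,\omega^m$ across singular or non-reduced fibers and across $X'_{\sing}$: this is the technical heart of Varouchas' paper (it rests on Barlet's theory of analytic families of cycles and is precisely where normality of $X'$ and equidimensionality of the fibers are consumed), and it is not addressed by observing that $\pi$ is a submersion over a dense open subset of $X'_{\reg}$. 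As a pointer to the literature your proposal is fine; as a proof it asserts two false intermediate statements.
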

	
	\vspace{10pt}
	
	Vaisman's theorem \cite{vaisman1980}, a fundamental result of lcK geometry, states that on a compact complex manifold, \textit{pure lcK} and \textit{K\" ahler} metrics (with respect to the same complex structure) cannot coexist. A generalization of Vaisman's theorem to locally irreducible complex spaces is \cite[Theorem 4.4]{PS2}, enounced below.
	
	\begin{theorem}\label{th_vaisman_singular}
		Let $(X,\omega,\theta)$ be a compact, locally irreducible, lcK space. If $X$ admits a K\"ahler metric, then $(X,\omega,\theta)$ is gcK. 
	\end{theorem}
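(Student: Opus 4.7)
The approach is to lift the problem to a Galois cover of $X$ on which the Lee form becomes exact, producing a K\"ahler metric $\widetilde{\omega}$ on which the deck group acts by a character into positive reals, and then to use the existence of a genuine K\"ahler metric $\omega_K$ on $X$ together with the compactness and local irreducibility of $X$ to force this character to be trivial.

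First, by \cite[Theorem 3.10]{PS1} there exists a Galois cover $\pi:\widetilde{X}\to X$ with deck group $\Gamma=\Deck(\widetilde{X}/X)$, a smooth function $\widetilde{f}$ on $\widetilde{X}$ with $d\widetilde{f}=\pi^*\theta$, and a K\"ahler metric $\widetilde{\omega}=e^{-\widetilde{f}}\pi^*\omega$ such that each $\gamma\in\Gamma$ acts on $\widetilde{\omega}$ as a homothety $\gamma^*\widetilde{\omega}=\rho(\gamma)\widetilde{\omega}$ for some character $\rho:\Gamma\to\mathbb{R}_{>0}$. The metric $\omega$ is gcK precisely when $\rho$ is trivial, so it suffices to prove $\rho\equiv 1$.

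Next, pull back $\omega_K$ to obtain a $\Gamma$-invariant K\"ahler metric $\widetilde{\omega}_K:=\pi^*\omega_K$ on $\widetilde{X}$. The heart of the argument is then a comparison of $\widetilde{\omega}$ and $\widetilde{\omega}_K$, both K\"ahler on $\widetilde{X}$ but transforming differently under $\Gamma$. Working with local potentials $\widetilde{\omega}=dd^c\widetilde{\varphi}_\alpha$ and $\widetilde{\omega}_K=dd^c\widetilde{\psi}_\alpha$ on a trivializing cover of $\widetilde{X}$, one extracts from their interplay (e.g.\ from the ratio of top powers on $\widetilde{X}_{\reg}$, or from a strictly plurisubharmonic combination of the two systems of potentials) a $\rho$-twisted quantity on $\widetilde{X}$ whose $\Gamma$-equivariance is governed by $\rho$ and which descends suitably to $X$. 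Supposing $\rho(\gamma_0)\neq 1$ for some $\gamma_0\in\Gamma$, iteration of $\gamma_0$ forces this quantity to grow or decay geometrically along the $\Gamma$-orbits, while the compactness of $X$ combined with the maximum principle for plurisubharmonic functions keeps the descended data bounded, a contradiction. Hence $\rho\equiv 1$ and $\omega$ is gcK.

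The main obstacle lies precisely in this last step: controlling the comparison across the singular locus of $X$. On smooth manifolds Vaisman's original argument exploits pointwise positivity of both K\"ahler forms everywhere (and, depending on the version, the Gauduchon representative in the conformal class of $\omega$); neither tool is directly available at singular points of $X$. Local irreducibility is exactly the hypothesis that rescues the maximum-principle step, since on a locally irreducible complex space a plurisubharmonic function attaining a local maximum on an irreducible component is locally constant there, and this is the precise ingredient that turns the $\rho$-twist-versus-boundedness dichotomy into a genuine contradiction. Without local irreducibility the maximum principle can fail, in line with the counterexample to Vaisman's theorem in the locally reducible setting constructed in \cite{PS2}.
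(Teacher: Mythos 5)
First, a point of comparison: the paper does not prove \ref{th_vaisman_singular} at all --- it is quoted as a known result from \cite[Theorem 4.4]{PS2} in the Preliminaries --- so there is no internal proof to measure your attempt against; your proposal has to stand on its own. Its first two steps do: passing to the minimal cover $\pi:\widetilde{X}\to X$ on which $\pi^*\theta=d\widetilde{f}$, obtaining the K\"ahler metric $\widetilde{\omega}=e^{-\widetilde{f}}\pi^*\omega$ with $\gamma^*\widetilde{\omega}=\rho(\gamma)\widetilde{\omega}$, and pulling back $\omega_K$ to a $\Gamma$-invariant K\"ahler metric are all standard. The gap is exactly where you announce ``the heart of the argument'': the $\rho$-twisted quantity is never defined, and neither candidate you offer closes. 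The ratio $h=\widetilde{\omega}^n/\widetilde{\omega}_K^n$ on $\widetilde{X}_{\reg}$ does satisfy $h\circ\gamma=\rho(\gamma)^n h$, but it factors as $e^{-n\widetilde{f}}$ times the pullback of $\omega^n/\omega_K^n$, and the latter is bounded above and below on $X_{\reg}$ (both metrics have smooth strictly psh ambient potentials on a compact space, so locally $\ei\del\delb\varphi\leq C\,\ei\del\delb\psi$ as forms). Hence the geometric growth of $h$ along $\gamma_0$-orbits only re-proves that $\widetilde{f}$ is unbounded on the non-compact cover, which is automatic whenever $\theta$ is not exact and yields no contradiction; to get one you would need $h$ bounded, which is equivalent to the conclusion you are trying to prove. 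The ``strictly plurisubharmonic combination of the two systems of potentials'' is not specified, and the maximum principle you invoke needs a globally defined psh function on $X$ (or on a compact irreducible subset), which local potentials do not glue to.

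More structurally, Vaisman's theorem is not a maximum-principle statement even for smooth compact manifolds: every known proof injects Hodge theory of the compact K\"ahler structure --- e.g.\ representing $[\theta]$ by $\alpha+\bar{\alpha}$ with $\alpha$ holomorphic and integrating $d(J\theta\wedge\omega^{n-1})$ over $X$ to force $\theta$ to be exact, or an equivalent cohomological vanishing. A proof on a singular space must either reproduce that input on $X$ itself or reduce to the smooth case by resolving singularities while staying K\"ahler (which is what a statement like \ref{blow-up-is-kahler-plus} is for, and is presumably the route of \cite{PS2}). Your sketch contains neither ingredient, so the character $\rho$ is never actually shown to be trivial. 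Your closing remarks on local irreducibility (the maximum principle for psh functions on locally irreducible spaces, the locally reducible counterexample of \cite{PS2}) are correct as context, but they are attached to an argument that is not there.
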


	\section{Blow-ups of lcK spaces}\label{sec:blow-ups-of-lck-spaces}
	
	In this section we prove one of our main results, giving necessary and sufficient conditions for a blow-up of an lcK space to inherit an lcK structure. The essential ingredient in the proof is the following adaptation to our context of \cite[Lemma 3.1]{OVV} on fibrations.
	
	\begin{lemma}\label{lemma_on_fibrations}
		Let $\pi:X\to Z$ be a locally trivial fibration with fiber $F$, where $(X,\omega,\theta)$ is an lcK space, $Z$ is a locally irreducible complex space, and the fiber $F$ is a connected compact complex manifold of positive dimension. Assume also that the map $\pi^*:H^1(Z)\to H^1(X)$ is an isomorphism. Then, $\omega$ is gcK.
	\end{lemma}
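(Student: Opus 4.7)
The plan is to adapt the original argument of Ornea--Verbitsky--Vuletescu \cite[Lemma 3.1]{OVV} to this singular setting. The first step is to extract cohomological information from the hypothesis that $\pi^{\ast}\colon H^1(Z)\to H^1(X)$ is an isomorphism. The low-degree terms of the Leray spectral sequence for $\pi\colon X\to Z$ (with real coefficients) give the exact sequence
\[
0\to H^1(Z,\R)\xrightarrow{\pi^{\ast}} H^1(X,\R)\xrightarrow{r} H^0(Z,R^1\pi_{\ast}\R)\to H^2(Z,\R),
\]
in which $r$ is the ``restriction to fibers'' edge map. Since $\pi^{\ast}$ is an isomorphism, $r=0$; in particular, for every fiber $F$ the class $[\theta\restrict{F}]$ vanishes in $H^1(F,\R)$.

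Next, I would translate this cohomological vanishing into metric data. Writing $\theta\restrict{F}=df_{F}$ for a smooth $f_{F}\colon F\to\R$, the form $e^{-f_{F}}\omega\restrict{F}$ is a K\"ahler metric on the compact complex manifold $F$; in particular, each fiber is K\"ahler. Using the surjectivity of $\pi^{\ast}$, decompose $\theta=\pi^{\ast}\alpha+df$ globally on $X$, where $\alpha$ is a closed $1$-form on $Z$ and $f\colon X\to\R$ is smooth. After replacing $\omega$ by the conformally equivalent lcK metric $\omega':=e^{-f}\omega$, we may assume that the Lee form is $\pi^{\ast}\alpha$, i.e., pulled back from the base, and that $\omega'\restrict{F}$ is K\"ahler on every fiber.

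It remains to prove that $\alpha$ is exact on $Z$; by injectivity of $\pi^{\ast}$ this is equivalent to $[\theta]=0$ in $H^1(X,\R)$. Locally on $Z$, where $\alpha\restrict{U}=dg_{U}$, the metric $e^{-\pi^{\ast}g_{U}}\omega'$ is K\"ahler on $\pi^{-1}(U)$, so $X$ is covered by opens carrying genuine K\"ahler metrics. The key step is to patch these local K\"ahler structures into a \emph{global} K\"ahler metric $\omega_{K}$ on $X$, using the local triviality of $\pi$, the continuously varying K\"ahler structure on the compact fibers, and the local irreducibility of $Z$ to match the choices of local primitives across the cover. Once such a $\omega_{K}$ is produced, $X$ (which in the intended application is compact and locally irreducible) is a compact locally irreducible lcK space admitting a K\"ahler metric, and Vaisman's theorem \ref{th_vaisman_singular} then forces $\omega$ to be gcK. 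The patching step --- precisely where the class $[\alpha]\in H^1(Z,\R)$ threatens to obstruct gluing --- is the main obstacle, and I expect both the local irreducibility of $Z$ and the compactness/K\"ahlerness of the fibers to enter essentially there.
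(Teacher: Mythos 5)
Your reduction does not close: the step you defer --- patching the local K\"ahler metrics $e^{-\pi^{*}g_{U}}\omega'$ on $\pi^{-1}(U)$ into a global K\"ahler (or globally conformally K\"ahler) metric --- is not a smaller problem than the lemma itself; it \emph{is} the lemma. Every lcK metric is, by definition, locally conformal to K\"ahler on the members of a suitable cover, so the existence of the local metrics $e^{-\pi^{*}g_{U}}\omega'$ uses none of the hypotheses on $\pi$, and the obstruction to choosing the local primitives $g_{U}$ consistently is precisely the class $[\alpha]\in H^{1}(Z,\R)$ (equivalently $[\theta]$) whose vanishing you are trying to prove. The argument is therefore circular at its only nontrivial point, and you give no mechanism by which the compactness of the fibers or the local irreducibility of $Z$ would actually kill $[\alpha]$. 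The fallback through Vaisman's theorem has a second problem: \ref{th_vaisman_singular} requires $X$ to be compact and locally irreducible, neither of which is assumed in the lemma (the paper deliberately applies Vaisman later, to $Z$, after transporting K\"ahlerness down via Varouchas' theorem --- not inside this lemma). Your first two observations (the edge-map argument giving $[\theta_{\restriction F}]=0$, hence K\"ahler fibers) are correct but are not where the content lies.

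The paper's proof supplies exactly the missing mechanism, and it is the heart of the OVV argument you set out to adapt. Pass to the minimal cover $\rho_{X}:\widetilde{X}\to X$ on which $\rho_{X}^{*}\theta$ becomes exact, so that $\widetilde{X}$ carries a genuine K\"ahler form $\widetilde{\omega}$ on which $\Gamma=\Deck(\widetilde{X}/X)$ acts by homotheties $\gamma^{*}\widetilde{\omega}=c_{\gamma}\widetilde{\omega}$. Because $\pi^{*}:H^{1}(Z)\to H^{1}(X)$ is an isomorphism, this cover is pulled back from a cover $\widetilde{Z}\to Z$, so $\widetilde{\pi}:\widetilde{X}\to\widetilde{Z}$ is again a locally trivial fibration with the same connected compact fiber $F$ of positive dimension $k$. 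Local irreducibility of $Z$ makes $\widetilde{Z}_{\reg}$ connected, hence all fibers over $\widetilde{Z}_{\reg}$ are homologous in $\widetilde{X}_{\mathrm{r}}:=\widetilde{\pi}^{-1}(\widetilde{Z}_{\reg})$, so the volume $\int_{F_{\tilde{b}}}\widetilde{\omega}^{\,k}$, being a cohomological pairing, is independent of $\tilde{b}$. Since each $\gamma\in\Gamma$ permutes the fibers, comparing $\Vol_{\widetilde{\omega}}(F_{\tilde{b}})$ with $c_{\gamma}^{k}\Vol_{\widetilde{\omega}}(F_{\gamma^{-1}(\tilde{b})})$ forces $c_{\gamma}=1$; thus $\widetilde{\omega}$ is $\Gamma$-invariant and $\omega$ is gcK. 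It is the positivity of the fiber volume, not any patching of local potentials, that kills the Lee class.
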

	\begin{proof}
		Let $\widetilde{X}$ be the minimal covering $\rho_X:\widetilde{X}\to X$ such that $\rho_X^*(\theta)$ is exact. Since $\pi^*:H^1(Z)\to H^1(X)$ is an isomorphism, there exists a covering $\rho_Z:\widetilde{Z}\to Z$ such that the diagram 
		\[
		\xymatrix{
			\widetilde{X} \ar[d]_-{\widetilde{\pi}} \ar[r]^-{\rho_X} & X \ar[d]^-{\pi}\\
			\widetilde{Z} \ar[r]_-{\rho_Z} & Z 
		}
		\]	
		\noindent is commutative, where $\widetilde{\pi}:\widetilde{X}\to\widetilde{Z}$ is also a locally trivial fibration with fiber $F$. Denote by $\widetilde{\omega}$ the K\" ahler form on $\widetilde{X}$ and $F_{\tilde{b}}=\widetilde{\pi}^{-1}(\tilde{b})\simeq F$. Also, denote $\widetilde{X}_{\text{r}}=\widetilde{\pi}^{-1}(\widetilde{Z}_{\reg})$, which is a complex manifold. $Z$ is locally irreducible, hence so is $\widetilde{Z}$, which implies that $\widetilde{Z}_{\reg}$ is connected, therefore $\widetilde{X}_{\text{r}}$ is connected too.
		By the Universal Coefficient Theorem for cohomology, we have 
		\[
		0 \rightarrow \Ext_{\mathbb{R}}(H_{2k-1}(\widetilde{X}_{\text{r}},\mathbb{R}),\mathbb{R}) \rightarrow  H^{2k}(\widetilde{X}_{\text{r}},\mathbb{R}) \rightarrow  \Hom_{\mathbb{R}}(H_{2k}(\widetilde{X}_{\text{r}},\mathbb{R}),\mathbb{R}) \rightarrow  0
		\]
		However, since $\Ext_{\mathbb{R}}(H_{2k-1}(\widetilde{X}_{\text{r}},\mathbb{R}),\mathbb{R})=0$ and $\widetilde{\omega}^k$ determines a class in $H^{2k}(\widetilde{X}_{\text{r}},\mathbb{R})$, we obtain that 
		$$F_{\tilde{b}}\mapsto \int_{F_{\tilde{b}}}\widetilde{\omega}^k=\Vol_{\widetilde{\omega}}(F_{\tilde{b}})$$
		depends only on the homology class $[F_{\tilde{b}}]\in H_{2k}(\widetilde{X}_{\text{r}},\mathbb{R})$.
		Also, since $\widetilde{Z}_{\reg}$ is connected, for any two base points $\tilde{b}_1, \tilde{b}_2\in \widetilde{Z}_{\reg}$, we have $[F_{\tilde{b}_1}]=[F_{\tilde{b}_2}]$, thus $\Vol_{\widetilde{\omega}}(F_{\tilde{b}_1})=\Vol_{\widetilde{\omega}}(F_{\tilde{b}_2})$. Consequently,
		\begin{equation*}
			\begin{split}
				\Vol_{\widetilde{\omega}}(F_{\tilde{b}})&=\int_{F_{\tilde{b}}}\widetilde{\omega}^k=\int_{F_{\gamma^{-1}(\tilde{b})}}(\gamma^*\widetilde{\omega})^k=\int_{F_{\gamma^{-1}(\tilde{b})}}(c_\gamma\widetilde{\omega})^k =c_\gamma^k\Vol_{\widetilde{\omega}}(F_{\gamma^{-1}(\tilde{b})})\\
				&=c_\gamma^k\Vol_{\widetilde{\omega}}(F_{\tilde{b}}),
			\end{split}
		\end{equation*}
		hence $c_\gamma=1$ for all $\gamma\in \Gamma=\Deck(\widetilde{X}/X)$. This means that $\widetilde{\omega}$ is $\Gamma$-invariant, and $\omega$ is gcK.
	\end{proof}
	
	\begin{lemma}\label{lemma_blow-up_IGCK}
		Let $(X,\omega,\theta)$ be an lcK space and $Z\subset X$ a compact complex subspace, such that $\omega_{\restriction Z}$ is gcK. Then, the blow-up $\pi:\widehat{X}\to X$ of $X$ along $Z$, admits an lcK metric.
	\end{lemma}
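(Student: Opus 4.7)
The strategy is to reduce to the Kähler case by a conformal change near $Z$: modify $\omega$ by a global smooth factor to obtain a genuine Kähler metric on a neighborhood of $Z$, apply \ref{blow-up-is-kahler-plus} to blow it up, multiply back by the inverse factor to recover an lcK metric near the exceptional divisor, and glue with $\pi^*\omega$ far from $\pi^{-1}(Z)$.

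Write $\omega=\reallywidehat{(U_i,\varphi_i,f_i)_{i\in I}}$ with Lee form $\theta=\reallywidehat{(U_i,f_i)_{i\in I}}$. Since $\omega_{\restriction Z}$ is gcK, $\theta_{\restriction Z}=df$ for some smooth $f\colon Z\to\mathbb{R}$, i.e.\ each difference $f-f_i$ is locally constant on $U_i\cap Z$. I would first refine to a finite subcover of $Z$ with each $U_i\cap Z$ connected, let $c_i$ be the resulting constant value of $f-f_i$ on $U_i\cap Z$, and then shrink each $U_i$ to a small enough neighborhood $V_i\subset U_i$ of $Z\cap U_i$ so that every connected component of each $V_i\cap V_j$ meets $Z$ (possible by compactness of $Z$). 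Setting $\tilde f:=f_i+c_i$ on $V_i$, the identity $f_i-f_j=c_j-c_i$ holding on $V_i\cap V_j\cap Z$ combined with the local constancy of $f_i-f_j$ forces consistency on all of $V_i\cap V_j$, producing a smooth $\tilde f$ on $V:=\bigcup_i V_i$ satisfying $\theta_{\restriction V}=d\tilde f$.

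Put $\omega_V:=e^{-\tilde f}\omega$ on $V$. Since $\tilde f-f_i=c_i$ on $V_i$,
\[
\omega_V\big|_{V_i}=\mathrm{i}\,e^{-c_i}\partial\overline{\partial}\varphi_i=\mathrm{i}\,\partial\overline{\partial}(e^{-c_i}\varphi_i),
\]
so $\omega_V$ is a Kähler metric on $V$ in the sense of \ref{K+lcK}\,(K). Choose an open $V'$ with $Z\subset V'\subset\overline{V'}\subset V$ and apply \ref{blow-up-is-kahler-plus} to $(V,\omega_V)$ with center $Z$ and neighborhood $V'$; this yields a Kähler metric $\widehat{\omega}_V$ on $\pi^{-1}(V)$ equal to $\pi^*\omega_V$ on $\pi^{-1}(V)\setminus\pi^{-1}(V')$. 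Now define
\[
\widehat{\omega}:=\begin{cases} e^{\pi^*\tilde f}\,\widehat{\omega}_V & \text{on }\pi^{-1}(V),\\ \pi^*\omega & \text{on }\widehat X\setminus\pi^{-1}(V'),\end{cases}
\]
which is consistent because $e^{\pi^*\tilde f}\widehat{\omega}_V=e^{\pi^*\tilde f}\pi^*\omega_V=\pi^*\omega$ on the overlap $\pi^{-1}(V)\setminus\pi^{-1}(V')$. The lcK data consisting of the Kähler potentials of $\widehat{\omega}_V$ paired with conformal factor $\pi^*\tilde f$ on $\pi^{-1}(V)$, together with the pullback data $(\pi^{-1}(U_i)\setminus\pi^{-1}(V'),\varphi_i\circ\pi,f_i\circ\pi)$ on $\widehat X\setminus\pi^{-1}(V')$, then defines an lcK structure on $\widehat X$: cross-compatibility across the overlap is precisely the displayed identity.

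The main obstacle is the trivialization of $\theta$ near $Z$: producing a single smooth $\tilde f$ on a neighborhood of $Z$ that represents $\theta$ as an exact TC $1$-form. The shrinking argument above works thanks to the compactness of $Z$ and the finiteness of the components of each $U_i\cap U_j$; alternatively one may invoke the existence of a fundamental system of neighborhoods of $Z$ in $X$ that deformation retract onto $Z$, so that an exact TC class on $Z$ automatically lifts to an exact TC class on such a neighborhood. Once $\tilde f$ is in hand, the remainder is a direct combination of \ref{blow-up-is-kahler-plus} with a conformal rescaling.
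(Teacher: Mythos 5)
Your proposal is correct and follows essentially the same route as the paper: trivialize $\theta$ on a neighborhood of $Z$, so that $e^{-\tilde f}\omega$ becomes K\"ahler there, apply \ref{blow-up-is-kahler-plus}, and glue the conformally rescaled result with $\pi^*\omega$ away from the exceptional divisor. The paper handles the one delicate point --- extending the exactness of $\theta_{\restriction Z}$ to a neighborhood --- via the second alternative you mention (triangulability of $Z$ plus Cauty's theorem make $Z$ an absolute neighborhood retract), which is safer than the hands-on shrinking argument, since the claim that every component of $V_i\cap V_j$ can be made to meet $Z$ after shrinking is not obviously achievable in general.
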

	\begin{proof}
		Since $Z$ admits triangulations (see \cite{KB} and \cite{LW}), it is a CW-complex and so, by \cite{cauty}, an absolute neighborhood retract. Since $\omega$ is gcK on $Z$, there then exists a neighborhood $U \supset Z$ such that $\theta_{\restriction U}$ is exact. Possibly by restricting $U$, we may assume that there exists a globally defined $f:X \to \R$ such that $\theta_{\restriction U} = df_{\restriction U}$ or, equivalently, $e^{-f} \omega$ is a \K \ form on $U$. Next, choose an open neighborhood $V$ such that $Z\subset V\Subset U$ and denote $\widehat{U}=\pi^{-1}(U)$ and $\widehat{V}=\pi^{-1}(V)$. By \ref{blow-up-is-kahler-plus}, there exists a \K \ metric $\omega_{\widehat{U}}$ on $\widehat{U}$ such that $\omega_{\widehat{U}}=\pi^*(e^{-f}\omega)$ on $\widehat{U}\setminus \widehat{V}$. Finally, $\omega_{\widehat{U}}$ and $\pi^*(e^{-f}\omega)_{\restriction \widehat{X}\setminus \widehat{V}}$ can be glued together to get an lcK metric on $\widehat{X}$.
	\end{proof}
	
	\medskip
	
	We can now prove
	
	\begin{theorem}\label{main_theorem_blowup}
		Let $(X,\omega,\theta)$ be an lcK space, and $Z\subset X$ a compact complex subspace, which is normal and is a locally complete intersection. 
		
		Then, the blow-up of $X$ along $Z$, denoted $\widehat{X}$, admits an lcK metric if and only if $\omega_{\restriction Z}$ is gcK.
	\end{theorem}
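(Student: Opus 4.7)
The reverse implication is covered by \ref{lemma_blow-up_IGCK}, which in fact needs no normality or local-complete-intersection hypothesis on $Z$. The interesting direction is the forward one, and my plan is to propagate the obstruction in three stages: first restrict $\widehat{\omega}$ to the exceptional divisor $E=\pi^{-1}(Z)$ and identify it as gcK there; then push a \K\ metric from $E$ down to $Z$; finally apply Vaisman's theorem on $Z$ itself.

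Concretely, suppose $\widehat{X}$ carries an lcK metric $\widehat{\omega}$. Since $Z$ is a locally complete intersection, of codimension $r+1\geq 2$ in $X$, the restriction $\pi_{\restriction E}\colon E\to Z$ is a locally trivial holomorphic fibration with fiber $\mathbb{P}^r$. I first aim to show that $\widehat{\omega}_{\restriction E}$ --- which is tautologically an lcK metric on the compact space $E$ --- is in fact gcK, by appealing to \ref{lemma_on_fibrations}. For that I must verify that $\pi_{\restriction E}^{*}\colon H^1(Z)\to H^1(E)$ is an isomorphism; I plan to extract this from the Leray spectral sequence, using that $\Aut(\mathbb{P}^r)$ is connected (so the sheaves $R^q\pi_{\ast}\mathbb{R}$ are constant, not merely locally constant), together with $H^0(\mathbb{P}^r,\mathbb{R})=\mathbb{R}$ and $H^1(\mathbb{P}^r,\mathbb{R})=0$.

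Once $\widehat{\omega}_{\restriction E}$ is known to be gcK, a global conformal change produces a genuine \K\ metric on $E$. I then apply Varouchas' theorem \ref{th_varouchas} to $\pi_{\restriction E}\colon E\to Z$: the map is proper (both $E$ and $Z$ being compact), surjective, open as a locally trivial fibration, and all of its fibers are $\mathbb{P}^r$, hence of pure dimension $r$. Since $Z$ is normal, the theorem delivers a \K\ metric on $Z$. At this stage the restriction $\omega_{\restriction Z}$ is a compact lcK structure on the locally irreducible (because normal) space $Z$, and $Z$ simultaneously admits a \K\ metric, so Vaisman's theorem \ref{th_vaisman_singular} forces $\omega_{\restriction Z}$ to be gcK, completing the forward direction.

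The main technical obstacle I foresee is the cohomological isomorphism $H^1(Z)\cong H^1(E)$ in the singular analytic setting: morally it is a triviality because the fiber is simply connected, but running the Leray spectral sequence over a singular base asks for a bit of bookkeeping. Should the spectral sequence prove delicate, I would fall back on a direct \v{C}ech computation using a trivialising open cover of $Z$, exploiting that the cocycles for a $\mathbb{P}^r$-bundle can be taken in $PGL_{r+1}$ and that $H^1$ of the fiber vanishes. A secondary, essentially routine check is that restricting an lcK structure to a closed subspace yields an lcK structure in the sense of \ref{K+lcK}; this follows because strict plurisubharmonicity in Grauert's sense is preserved under restriction and the compatibility cocycle simply pulls back.
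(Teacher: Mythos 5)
Your proposal is correct and follows essentially the same route as the paper: the reverse implication via \ref{lemma_blow-up_IGCK}, and the forward one by applying \ref{lemma_on_fibrations} to the $\mathbb{P}^r$-fibration $\pi^{-1}(Z)\to Z$, then \ref{th_varouchas} to obtain a \K\ metric on the normal space $Z$, and finally \ref{th_vaisman_singular}. Your sketch of how to verify $\pi^* : H^1(Z)\to H^1(\pi^{-1}(Z))$ is an isomorphism (Leray with simply connected fiber) is a detail the paper leaves implicit, but the overall argument is identical.
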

	\begin{proof}
		The ``if" part follows directly from \ref{lemma_blow-up_IGCK}, hence it is true even without the additional assumptions on $Z$.
		
		For the ``only if" part, we consider the restriction $\pi_{\restriction \pi^{-1}(Z)}:\pi^{-1}(Z)\to Z$, which is a locally trivial fibration, and verifies all the conditions in \ref{lemma_on_fibrations}. Therefore, $\pi^{-1}(Z)$ is gcK. Then, by \ref{th_varouchas}, $Z$ admits a \K\ metric. $Z$ is normal, in particular locally irreducible, so Vaisman's theorem for lcK spaces (\ref{th_vaisman_singular}) ensures that the lcK metric $\omega_{\restriction Z}$ is gcK.
	\end{proof}
	
	\section{Modifications of lcK spaces}\label{sec:apf-cu-lck-sectiunea}
	
	We now turn to the second goal of our paper, which is to find a type of non-\K \ structure which is stable under modifications. As proved by \cite[Theorem 2.9]{OVV} and \ref{main_theorem_blowup}, lcK structures do not have this property even in the smooth setting. This is instead achieved by looking at quasi-lcK metrics. The result below generalises \cite{PF}:
	
	\begin{theorem}\label{apf_with_lck}
		Let $p:X\to Y$ be a modification of the compact complex space $Y$. Suppose that $Y$ is quasi-lcK. Then, $X$ also admits a quasi-lcK metric. 
	\end{theorem}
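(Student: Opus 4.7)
The plan is to lift the problem to a covering where the quasi-lcK data becomes quasi-K\"ahler, apply \cite[Theorem 2.5]{PF} equivariantly there, and then descend via a quasi-lcK adaptation of \cite[Theorem 3.10]{PS1}. Write the quasi-lcK metric on $Y$ as $\omega_Y = \reallywidehat{(V_i,\varphi_i,f_i)_{i \in I}}$ with Lee form $\theta = \reallywidehat{(V_i,f_i)_{i\in I}}$, and let $\rho_Y \colon \widetilde Y \to Y$ be the minimal covering on which $\rho_Y^*\theta = dg$ for some continuous $g \colon \widetilde Y \to \R$. Picking locally constant $c_i^{(k)}$ such that $g = f_i \circ \rho_Y + c_i^{(k)}$ on each connected component $\widetilde V_i^{(k)}$ of $\rho_Y^{-1}(V_i)$, and setting $\widetilde\varphi_i^{(k)} := e^{-c_i^{(k)}}\,\varphi_i\circ\rho_Y$, the family $\reallywidehat{(\widetilde V_i^{(k)}, \widetilde\varphi_i^{(k)})_{i,k}}$ is a quasi-K\"ahler metric $\widetilde\omega_Y$ on $\widetilde Y$, and the deck group $\Gamma := \Deck(\widetilde Y/Y)$ acts by holomorphic homotheties $\gamma^*\widetilde\omega_Y = k_\gamma\,\widetilde\omega_Y$ with constants $k_\gamma := e^{g - \gamma^*g} > 0$. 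Set $\widetilde X := X \times_Y \widetilde Y$; this is a covering of $X$ with deck group $\Gamma$ equipped with a $\Gamma$-equivariant lifted modification $\widetilde p \colon \widetilde X \to \widetilde Y$.

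Applying \cite[Theorem 2.5]{PF} to $\widetilde p$ produces a quasi-K\"ahler metric $\widetilde\omega$ on $\widetilde X$ whose local potentials on a cover $\{\widetilde W_\alpha\}$ take the shape
\[
\widetilde\Psi_\alpha \;=\; \widetilde\varphi_\alpha \circ \widetilde p \;+\; \epsilon\,\chi_\alpha \log|h_\alpha|^2,
\]
where $\widetilde\varphi_\alpha$ is a local potential of $\widetilde\omega_Y$ over $\widetilde p(\widetilde W_\alpha)$, $h_\alpha$ is a local holomorphic function vanishing on the exceptional set of $\widetilde p$, $\chi_\alpha$ is a smooth cutoff, and $\epsilon > 0$ is small enough to ensure strict plurisubharmonicity. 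By compactness of $X$, the cover $\{\widetilde W_\alpha\}$ together with the systems $\{\chi_\alpha\}$ and $\{h_\alpha\}$ can be taken as pullbacks of a finite set of data on $X$, and are therefore $\Gamma$-invariant. The pulled-back pieces $\widetilde\varphi_\alpha \circ \widetilde p$ transform correctly under $\Gamma$ (rescaled by $k_\gamma$ modulo pluriharmonic), but the $\Gamma$-invariant corrections do not. To remedy this, replace $\chi_\alpha \log|h_\alpha|^2$ by $e^{-g\circ\widetilde p}\,\chi_\alpha\log|h_\alpha|^2$: since $(g\circ\widetilde p)\circ\gamma = g\circ\widetilde p - \log k_\gamma$ by the compatibility of $\widetilde p$ with the $\Gamma$-actions, the new correction picks up exactly the factor $k_\gamma$ under $\gamma$, yielding $\gamma^*\widetilde\Psi_\alpha = k_\gamma\,\widetilde\Psi_{\alpha'}$ modulo pluriharmonic and hence $\gamma^*\widetilde\omega = k_\gamma\,\widetilde\omega$.

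The main obstacle is to verify that this twisted construction still meets Definition \ref{K_general+lcK_general}\,(q-K). Strict plurisubharmonicity is the chief concern: expanding
\[
i\partial\bar\partial\bigl(e^{-g\circ\widetilde p}\,\chi_\alpha \log|h_\alpha|^2\bigr) \;=\; e^{-g\circ\widetilde p}\cdot i\partial\bar\partial(\chi_\alpha \log|h_\alpha|^2) \;+\; (\text{derivative terms}),
\]
the first summand is the original positive Popa--Fischer correction rescaled by the smooth positive factor $e^{-g\circ\widetilde p}$ and still dominates in the $\widetilde p$-fiber directions, while the derivative terms are uniformly bounded across the finite pulled-back cover, so a uniformly small $\epsilon$ preserves strict positivity. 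The polar set of $\widetilde\Psi_\alpha$ stays inside $\widetilde p^{-1}(A_{\widetilde\omega_Y}) \cup \{h_\alpha = 0\}$, and the bounded-difference condition (d) on overlaps carries over from the corresponding condition on $\widetilde\omega_Y$ combined with the Popa--Fischer matching of corrections. Once $\widetilde\omega$ is built with $\gamma^*\widetilde\omega = k_\gamma\,\widetilde\omega$, the descent is handled by the quasi-lcK variant of \cite[Theorem 3.10]{PS1}, obtained from its proof by transporting $-\infty$ values along $\Gamma$-orbits: the $\Gamma$-equivariant data on $\widetilde X$ descend to local data $(W_\alpha, \Psi_\alpha, f_\alpha)$ on $X$ satisfying Definition \ref{K_general+lcK_general}\,(q-lcK), with the $f_\alpha$ recovered from local branches of the character $\gamma \mapsto -\log k_\gamma$, producing the required quasi-lcK metric on $X$.
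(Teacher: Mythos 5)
Your overall strategy coincides with the paper's: pass to a covering where the Lee data becomes exact, build a $\Gamma$-equivariant quasi-K\"ahler metric on the fibre product $\widetilde X = X\times_Y\widetilde Y$ via the Popa--Fischer construction, and descend through an adaptation of \cite[Theorem 3.10]{PS1}. The gap is in how you force equivariance of the correction term. Multiplying the Popa--Fischer correction by the smooth, \emph{non-constant} function $e^{-g\circ\widetilde p}$ does not preserve plurisubharmonicity: expanding $\sqrt{-1}\,\partial\overline{\partial}\bigl(e^{-g\circ\widetilde p}\,\chi_\alpha\log|h_\alpha|^2\bigr)$, the term $\chi_\alpha\log|h_\alpha|^2\cdot \sqrt{-1}\,\partial\overline{\partial}\, e^{-g\circ\widetilde p}$ and the cross terms $\sqrt{-1}\,\partial e^{-g\circ\widetilde p}\wedge\overline{\partial}\bigl(\chi_\alpha\log|h_\alpha|^2\bigr)$ are unbounded and of no fixed sign near $\{h_\alpha=0\}$ --- exactly where the correction is supposed to supply the missing positivity along the fibres of the modification. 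Your assertion that ``the derivative terms are uniformly bounded across the finite pulled-back cover'' is false there, and shrinking $\epsilon$ does not help: $\epsilon$ multiplies the unbounded bad terms together with the good ones, while the transverse positivity coming from $\widetilde\varphi_\alpha\circ\widetilde p$ has bounded Levi form and cannot absorb a contribution tending to $-\infty$.

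The paper avoids this by never multiplying a potential by a non-constant smooth function. Since $\pi_Y^*f_j-g$ is locally constant, equal to $d_j^\gamma$ on each sheet $V_j^\gamma$, the rescaling is done sheet by sheet by the \emph{constants} $e^{d_j^\gamma}$ (which trivially preserves strict plurisubharmonicity), and the cocycle rule $d_j^{\eta\gamma}=d_j^\gamma+\ln c_\eta$ delivers the homothety. The price is that the sheet-wise rescaled corrections no longer match on overlaps, which is why the paper then needs Demailly's regularized maximum to glue them into a single global function $\mathfrak v$, together with a large constant $M$ in front of $\lambda_j$ to keep $Me^{d_j^\gamma}\lambda_j\circ\pi_Y+\mathfrak v_i^\eta$ strictly psh on overlaps; your proposal skips this gluing problem entirely (your appeal to ``the Popa--Fischer matching of corrections'' no longer applies once the corrections are rescaled differently on different sheets). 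Your setup already contains the required locally constant data $c_i^{(k)}$, so the argument can be repaired along the paper's lines, but not with the global factor $e^{-g\circ\widetilde p}$ as written.
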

	\begin{proof}
		We assume for this proof that $Y$ is not only quasi-lcK, but lcK. The general case requires only minor changes when writing the compatibility conditions and is left to the reader.
		
		Denote by $\pi_Y : Y_0 \to Y$ the universal covering of $Y$, let $\Gamma = \Deck(Y_0/Y)$, and consider $X_0=X\times_Y Y_0$  to be the pull-back of the universal cover $\pi_Y:Y_0\to Y$ along $p:X\to Y$. Then, we have the following commutative diagram:
		\[
		\xymatrix{
			X_0 \ar[d]_-{\pi_X} \ar[r]^-{p_0} & Y_0 \ar[d]^-{\pi_Y}\\
			X \ar[r]_-{p} & Y
		}
		\]		
		\noindent where $p_0:X_0\to Y_0$ is a modification, and $\pi_X:X_0\to X$ is a covering of $X$.
		Consider $\omega=\reallywidehat{(V_j,\lambda_j,f_j)_{j\in I}}$ an lcK metric on $Y$ such that $I$ is finite, the open sets $V_j$ are all connected and Stein, and for every $j\in I$, 
		\[
		\pi_Y^{-1}(V_j)=\bigcup_{\gamma\in\Gamma} V_j^\gamma
		\]
		is a disjoint union of copies of $V_j$, such that for any $\eta\in\Gamma$, $\eta(V_j^\gamma)=V_j^{\eta\gamma}$.
		The proof of \cite[Theorem 3.10]{PS1} shows that there exists a smooth function $g:Y_0\to\mathbb{R}$ such that $\omega_0:=e^{-g}\pi_Y^*\omega$ is a K\" ahler metric on $Y_0$, and such that $\Gamma$ acts on $\omega_0$ by positive homotheties, \textit{i.e.} for every $\gamma\in\Gamma$, $\gamma^*\omega_0=c_\gamma \omega_0$, where $c_\gamma>0$.
		
		Note that the above imply that $\pi_Y^* f_j - g$ is locally constant on $\pi_Y^{-1}(V_j)$, so
		\begin{equation}
			\label{eq:apflck1}
			\pi_Y^* f_j - g = d_j^\gamma \in \R \text{ on every } V_j^\gamma,
		\end{equation}
		and, furthermore, that for $\eta \in \Gamma$, $\eta^* g = g - \ln c_\gamma$. Applying $\eta^*$ to \eqref{eq:apflck1}, we obtain
		\begin{equation}
			\label{eq:apflck2}
			d_j^{\eta \gamma} = d_j^\gamma + \ln c_\eta, \ \forall \eta \in \Gamma.
		\end{equation}
		
		It will be useful later to use the compatibility property for the local potentials of the \K \ form  $\omega_0$ on $Y_0$. On $V_j^\gamma\setminus (Y_0)_{\sing}$, we have
		\[
		\omega_0 = e^{-g} \pi^* ( e^{f_j} \ei \del \delb \lambda_j ) = \ei \del \delb ( e^{d_j^\gamma} \lambda_j ),
		\]
		so 
		\begin{equation}
			\label{eq:apflck3}
			\ei \del \delb ( e^{d_j^\gamma} \lambda_j ) =\ei \del \delb ( e^{d_i^\eta} \lambda_i ) \text{ on } (V_j^\gamma \cap V_i^\eta) \setminus (Y_0)_{\sing}.
		\end{equation}
		
		We have the open covering of $X_0$ given by  $U_j^\gamma=p_0^{-1}(V_j^\gamma)$ with $j\in I, \gamma\in \Gamma$. For a fixed $j\in I$, the open sets $(U_j^\gamma)_\gamma$ are mutually disjoint. Also, on each $U_j^\gamma$ we have the function $\lambda_j\circ p_0$, which is psh on $U_j^\gamma$, but not necessarily strictly psh. 
		
		Since $V_j$ are Stein, we may assume that we have a covering $(V'_j)_{j \in I}$ with $V'_j \Subset V_j$ embedded as analytic sets in open balls $\Phi_j: V'_j \to B(0, r_j)$ such that $0 \in \Phi_j(V'_j)$ and $\Phi_j(z) \to \partial B(0, r)$ if $x \to \partial V'_j$. As before, for every $j\in I$, we have
		$
		\pi_Y^{-1}(V^\prime_j)=\bigcup_{\gamma\in\Gamma} V_j^{\prime\gamma}
		$
		a disjoint union of copies of $V^\prime_j$, such that for any $\eta\in\Gamma$, $\eta(V_j^{\prime\gamma})=V_j^{\prime\eta\gamma}$.
		
		As $p:X\to Y$ is a modification, by definition there exists a rare analytic set $A\subset Y$ such that $p^{-1}(A)$ is rare in $X$ and $p_{\restriction{X\setminus p^{-1}(A)}}:X\setminus p^{-1}(A)\to Y\setminus A$ is a biholomorphism. 
		
		Next, using the same arguments as in \cite[First step of the proof, pp.841-844]{PF}, there exists a coherent sheaf of ideals $\mathcal{I}$ on $Y$ of holomorphic functions which vanish on $A$, and for every $j\in I$, there exist sections $f_{j,k}\in\mathcal{I}(V_j), k=1,\ldots,s_j$, generating each fiber of $\mathcal{I}$ over $V_j$, such that 
		\[
		A\cap V_j=\{ x\in V_j | f_{1,j}(x)=\cdots=f_{j,s_j}(x)=0 \}, 
		\]
		and moreover, such that if we consider
		\begin{equation*}
			\label{eq:psi}
			\psi_j=\lambda_j + \log\left(\sum_{l=1}^{s_j} |f_{j,l}|^2\right),
		\end{equation*}
		then $\psi_j\circ p$ is strictly psh on $U_j=p^{-1}(V_j)$. 
		
		Denoting $\psi_j^\gamma=e^{d_j^\gamma}(\psi_j\circ \pi_Y)_{|V_j^\gamma}$, 
		we have $\psi_j^\gamma\circ p_0$ strictly psh on $U_j^\gamma$. However, we cannot use this family of functions right away to construct a \K\ metric on $X_0$, because the \enquote{perturbation term} $\log\left(\sum_{l=1}^{s_j} |f_{j,l}|^2\right)$ in the definition of $\psi_j$ has ruined the compatibility condition. 
		For this reason, we turn to ideas in \cite[Second step of the proof, pp.844-846]{PF}, to construct a globally defined function $\mathfrak{v}$ on $X_0$, which we use to restore the compatibility condition required for a \K\ metric. 
		
		For any $j\in I$, consider  
		$$a_j:=|f_{j,1}|^2+\cdots+|f_{j,s_j}|^2 \text{ on } V_j.$$
		
		On $V_j\cap V_k$, the sections in $\mathcal{I}(V_j\cap V_k)$ are generated by $(f_{j,1},\ldots,f_{j,s_j})_{\restriction V_j\cap V_k}$, and also by $(f_{k,1},\ldots,f_{k,s_k})_{\restriction V_j\cap V_k}$, hence
		the quotient $$\frac{a_j}{a_k}=\frac{|f_{j,1}|^2+\cdots+|f_{j,s_j}|^2}{|f_{k,1}|^2+\cdots+|f_{k,s_k}|^2}$$ is lower and upper bounded on $(V_j^\prime\cap V_k^\prime)\setminus A$, hence $\log a_j - \log a_k$ is bounded on $(V_j^\prime\cap V_k^\prime)\setminus A$. 
		
		Now consider $\mathfrak{v}_j:V_j^\prime\to[-\infty,\infty)$ defined by
		\[
		\mathfrak{v}_j(z)=\log a_j(z)-\frac{1}{r_j^2-|\Phi_j(z)|^2}=:\log a_j(z)-\sigma_j(z).
		\]
		By construction, $$\mathfrak{v}_j(z)\to -\infty  \text{ for } z\to \partial V_j^\prime, \text{ and } \mathfrak{v}_j(z)= -\infty \text{ for } z\in A\cap V_j^\prime.$$
		For each $\gamma\in\Gamma$, denote $\mathfrak{v}_j^\gamma=e^{d_j^\gamma}(\mathfrak{v}_j\circ\pi_Y)_{\restriction V_j^{\prime\gamma}}$.
		We now make use of Demailly's technique of regularized maximum \cite[Lemma 5.18, p.43]{Demailly_book} to glue the functions $\mathfrak{v}_j^\gamma$ to a function $\mathfrak{v}$ on $Y_0$. Let $\rho:\mathbb{R}\to\mathbb{R}$ be a smooth function with $\rho\geq 0$, $\supp\rho\subset \left[-\frac{1}{2},\frac{1}{2}\right]$, such that $\int_\mathbb{R} \rho(u) du=1$ and consider the function $m:\mathbb{R}^q\to\mathbb{R}$ given by
		\[
		m(t_1,\ldots,t_q)=\int_{\mathbb{R}^q}\max\{t_1+u_1,\ldots,t_q+u_q\}\prod_{1\leq n\leq q}\rho(u_n)du_n.
		\]
		It is easy to verify that whenever $$t_j<\max\{t_1,\ldots,t_{j-1},t_{j+1},\ldots,t_q\}-1,$$ we have $$m(t_1,\ldots,\widehat{t_j},\ldots,t_q)=m(t_1,\ldots,t_j,\ldots,t_q),$$ where $\widehat{\cdot}$ denotes, as usual, that the respective variable is missing.		
		
		Note that by construction, our covering $(V_j^{\prime\gamma})_{j\in I,\gamma\in \Gamma}$ of $Y_0$ has the property that any point $z\in Y_0$ belongs to finitely many sets in the covering, more precisely, at most $q=|I|$ sets. 
		Thus, it makes sense to consider $\mathfrak{v}(z)=m((\mathfrak{v}_j^\gamma(z))_{j,\gamma})$, since by ignoring the $\mathfrak{v}_j^\gamma$'s for which $z\not \in V_j^{\prime\gamma}$, we are left with a finite family of functions as arguments for $m$. 
		
		Moreover, since $\mathfrak{v}_j^\gamma(z)\to -\infty$ as $z\to\partial V_j^{\prime\gamma}$, by our previous remark, the values of $\mathfrak{v}_j^\gamma(z)$ near the boundary of $V_j^{\prime\gamma}$ do not play an effective role in the maximum. 
		We can thus consider a covering $(W_j)_j$ of $Y$ such that $W_j \Subset V_j^\prime$ and $\mathfrak{v}_j$ for $z \in V^{\prime}_j \setminus W_j$ does not have an effective role in $m$, \ie $m(({\mathfrak{v}_i}_{\restriction W_j})_i) = m((\mathfrak{v}_i)_i).$
		For every $j\in I$, we define as before
		$
		\pi_Y^{-1}(W_j)=\bigcup_{\gamma\in\Gamma} W_j^\gamma
		$
		such that $\eta(W_j^\gamma)=W_j^{\eta\gamma}$ for every $\eta\in\Gamma$. 
		Note that our assumption for $W_j$ implies that, for every $\gamma \in \Gamma$, $\mathfrak{v}_j^\gamma$ for $z\in V_j^{\prime\gamma}\setminus W_j^\gamma$ does not have an effective role in $m$, that is  $m(({\mathfrak{v}_j^\gamma}_{\restriction W_j^\gamma})_{j,\gamma})=m((\mathfrak{v}_j^\gamma)_{j,\gamma})$.
		
		As $\lambda_j$ is strictly psh on $V_j$ and $\sigma_i$ and all its derivatives are bounded on $W_i$, we can find a big enough $M_{i, j}$ such that $M_{i, j} \lambda_j - \theta_i$ is strictly psh on $V_j \cap W_i$. Take $M = \max \{ M_{i, j} \ | \ i, j \in I \}$. 
		
		On $p_0^{-1} ( V_j^\gamma \cap W_i^\gamma ) = U_j^\gamma \cap p_0^{-1} (W_i^\gamma)$, we can define for each $\gamma \in \Gamma$ the function 
		\[
		\varphi_j^\gamma := \left( M e^{d_j^\gamma} \lambda_j \circ \pi_Y + \mathfrak{v} \right) \circ p_0 = m \left( \left( M e^{d_j^\gamma} \lambda_j \circ \pi_Y \circ p_0 + \mathfrak{v}_i^\eta \circ p_0 \right)_{i \in I, \eta \in \Gamma} \right).
		\]
		Note that $$M e^{d_j^\gamma} \lambda_j \circ \pi_Y \circ p_0 + \mathfrak{v}_i^\eta \circ p_0 = e^{d_j^\gamma} \left( M\lambda_j \circ \pi_Y \circ p_0 + \log a_i \circ p_0 - \theta_i \circ p_0 \right). $$
		
		Recall that on $U_i^\gamma$ we have defined the function 
		\[
		\psi_i^\gamma \circ p = e^{d_i^\gamma} \psi_i \circ \pi_Y \circ p_0 = e^{d_i^\gamma} \left(\lambda_i + \log a_i \right) \circ \pi_Y \circ p_0
		\]
		which was strictly psh. However, the only requirement for this was that $\lambda_i$ is strictly psh on $V_i$, so we may replace it by $M\lambda_j -\theta_i$ and obtain that 
		\[
		e^{d_j^\gamma} \left( M\lambda_j - \theta_i + \log a_i \right) \circ \pi_Y \circ p_0 = e^{d_j^\gamma} \left( M\lambda_j \circ \pi_Y \circ p_0 + \log a_i \circ p_0 - \theta_i \circ p_0 \right)
		\]
		is strictly psh on $U_j^\gamma \cap p_0^{-1} (W_i^\gamma)$, for all $i, j \in I$ and all $\gamma \in \Gamma$. 
		
		Now, the properties of the regularized maximum function $m$, ensure that $m(\tau_1, \tau_2, ..., \tau_k)$ is strictly psh for any strictly psh functions $\tau_1, ..., \tau_k$ (see \cite[Lemma 5.18]{Demailly_book} and \cite[p.846]{PF}). Since being strictly psh is a local property, this remains true for a family of strictly psh functions with locally finite domains, such that each of them tends to $-\infty$ at the boundary, as is the case for the definition of $\varphi_j^\gamma$. By all the arguments above we get that 
		\[
		\varphi_j^\gamma: U_j^\gamma \to [-\infty, \infty), \ \varphi_j^\gamma = \left( Me^{d_j^\gamma} \lambda_j \circ \pi_Y + \mathfrak{v} \right) \circ p_0 
		\]
		is strictly psh on $U_j^\gamma$ and is regular outside of $$(\varphi_j^\gamma)^{-1}(\{-\infty \}) = U_j^\gamma \cap (\pi_Y \circ p_0)^{-1}(A),$$ for all $j \in I$, $\gamma \in \Gamma$.
		
		We claim that this family of strictly psh functions $(\varphi_j^\gamma)_{j \in I, \gamma \in \Gamma}$ on the covering $(U_j^\gamma)_{j \in I, \gamma \in \Gamma}$ define a quasi-\K \ metric on $X_0$ \ie that they also satisfy conditions $(c)$ and $(d)$ from \ref{K_general+lcK_general}.

		Indeed, using \eqref{eq:apflck3}, on 
		\[ 
		(U_j^\gamma \cap U_i^\eta) \setminus ((X_0)_{\textrm{sing}} \cup A_{\varphi_j^\gamma}  \cup A_{\varphi_i^\eta}) = (U_j^\gamma \cap U_i^\eta) \setminus ((X_0)_{\textrm{sing}} \cup (\pi_Y \circ p_0)^{-1}(A) ),
		\] 
		we get
		\begin{align*}
			\ei \del \delb (\varphi_j^\gamma - \varphi_i^\eta) &= \ei \del \delb \left( M e^{d_j^\gamma} \lambda_j \circ \pi_Y \circ p_0 - M e^{d_i^\eta} \lambda_i \circ \pi_Y \circ p_0 \right) \\
			&= \ei M \left( \del \delb (e^{d_j^\gamma} \lambda_j) - \del \delb (e^{d_i^\eta} \lambda_i ) \right) \circ \pi_Y \circ p_0 \\
			&= 0.
		\end{align*}
		
		Furthermore, $\varphi_j^\gamma - \varphi_i^\eta$ on $(U_j^\gamma \cap U_i^\eta) \setminus (A_{\varphi_j^\gamma}  \cup A_{\varphi_i^\eta})$ is locally bounded around points of $A_{\varphi_j^\gamma}  \cup  A_{\varphi_i^\eta}$ because $\lambda_i - \lambda_j$ are locally bounded.
		
		Having proven that $\omega_{1} := \reallywidehat{ (U_j^\gamma, \varphi_j^\gamma)_{j \in I, \gamma \in \Gamma} }$ is a quasi-\K \ metric on $X_0$, \cite[Theorem 3.10]{PS1} can be easily adapted to show that this implies $X$ has a quasi-lcK structure if $\Deck(X_0/X) \simeq \Gamma$ acts by positive homotheties on $\omega_{1}$. Indeed, for any $\eta \in \Gamma$, on $U_j^\gamma \setminus ((X_0)_{\textrm{sing}} \cup (\pi_Y \circ p_0)^{-1}(A))$, we have
		\begin{align*}
			\begin{split}
				\eta^* \omega_1 &= \ei \del \delb ( \varphi_j^{\eta \gamma} \circ \eta ) = \ei \del \delb ( (M e^{d_j^{\eta\gamma}} \lambda_j \circ \pi_Y + \mathfrak{v} ) \circ p_0 \circ \eta) \\
				& \ \ = \ei \del \delb ( M e^{d_j^{\eta \gamma}} \lambda_j \circ p \circ \pi_X  + \mathfrak{v} \circ \eta \circ p_0 ),
			\end{split}
		\end{align*}
		where we used that $\pi_Y \circ \eta = \pi_Y$ and have denoted by $\eta$ both its action on $X_0 \to X$ and on $Y_0 \to Y$. We now finally use \eqref{eq:apflck2} for both terms: $\mathfrak{v}_j^\gamma \circ \eta = c_\eta \mathfrak{v}_j^\gamma$, so $\mathfrak{v} \circ \eta = c_\eta \mathfrak{v}$ and we can continue
		\[
		\eta^* \omega_1 = \ei \del \delb ( M c_\eta e^{d_j^{\gamma}} \lambda_j \circ p \circ \pi_X  + c_\eta \mathfrak{v} \circ p_0 ) = c_\eta \omega_1.
		\]
	\end{proof}
	
	\newpage


\begin{thebibliography}{100}
		
		\bibitem[Ca]{cauty} R. Cauty: \textit{Convexit\' e topologique et prolongement des fonctions continues}, Compositio Math. \textbf{27} (1973), pp. 233--273.
		
		\bibitem[Co]{Coltoiu1986} M. Col\c toiu, \textit{A positivity theorem for modifications of projective algebraic spaces}, IMAR preprint series in mathematics, No. \textbf{2}/1986, http://imar.ro/$\sim$increst/1986/ 2\textunderscore 1986.pdf.
		
		\bibitem[De]{Demailly_book} J.-P. Demailly: \textit{Complex analytic and differential geometry},  https://www-fourier.ujf-grenoble.fr/$\sim$demailly/manuscripts/agbook.pdf
		
		\bibitem[Fu]{fuji75} A. Fujiki: {\em On the blowing down of analytic spaces}, Publ. RIMS Kyoto Univ. \textbf{10} (1975), 473-507.
		
		\bibitem[Gr]{grauert} H. Grauert: {\em \"Uber Modifikationen und exzeptionelle analytische Mengen}, Math. Ann. \textbf{146} (1962), 331-368.
		
		\bibitem[KB]{KB} B. O. Koopman, A. B. Brown: {\em On the Covering of Analytic Loci by Complexes}, Trans. Amerc. Math. Soc. \textbf{34(2)} (1932), pp. 231--251.
		
		\bibitem[LW]{LW} S. Lefchetz, J. H. C. Whitehead: {\em On Analytical Complexes}, Trans. Amerc. Math. Soc. \textbf{35(2)} (1933), pp. 510--517.
		
		\bibitem[PF]{PF} A. Popa-Fischer: {\em Generalized K\" ahler metrics and proper modifications}, Osaka J. Math. \textbf{41} (2004), pp. 839--851.
		
		\bibitem[OV]{OV} L. Ornea, M. Verbitsky: {\em Principles of Locally Conformally K\" ahler Geometry}, arXiv:2208.07188 (2022).
		
		\bibitem[OVV]{OVV} L. Ornea, M. Verbitsky, V. Vuletescu: {\em Blow-ups of Locally Conformally K\" ahler Manifolds}, Int. Math. Res. Not. IMRN \textbf{12} (2013), pp. 2809--2821.
		
		\bibitem[PS21]{PS1} O. Preda, M. Stanciu: {\em Coverings of locally conformally Kähler complex spaces}, Math. Z. {\bf 298} (2021), pp. 639--651. 
		
		\bibitem[PS22]{PS2} O. Preda, M. Stanciu: {\em Vaisman Theorem for lcK spaces}, to appear in Ann. Sc. Norm. Super. Pisa Cl. Sci. 
		
		\bibitem[Vai76]{vaisman1976} I. Vaisman {\em On locally conformal almost K\" ahler manifolds}, Israel J. Math. \textbf{24 No. 3-4} (1976), pp. 338--351.
		
		\bibitem[Vai80]{vaisman1980} I. Vaisman, {\em On locally and globally conformal K\" ahler manifolds}, Trans. Amer. Math. Soc.
		{\bf 262} (1980), pp. 533--542.
		
		\bibitem[Var1]{Varouchas1986} J. Varouchas: {\em Sur l'image d'une vari\' et\' e K\" ahl\' erienne compacte}, Fonctions de Plusieurs Variables Complexes {\bf V} (1986), 245-259.  
		
		\bibitem[Var2]{Varouchas1989} J. Varouchas: \textit{K\" ahler Spaces and Proper Open Morphisms}, Math. Ann. \textbf{283} (1989), pp. 13--52.	
		
	\end{thebibliography}
\end{document}